\documentclass[11pt]{amsart}

\usepackage{color}

\usepackage{eucal}
\usepackage{amssymb,url,amscd,amsfonts}
\usepackage{latexsym}
\usepackage{hyperref}
\usepackage[all]{xy}
\usepackage{tikz}
\theoremstyle{plain}
\newtheorem{theorem}{Theorem}

\newtheorem{lemma}[theorem]{Lemma}

\newtheorem{proposition}[theorem]{Proposition}
\newtheorem{conjecture}[theorem]{Conjecture}

\numberwithin{equation}{section} 
\numberwithin{theorem}{section}

\begin{document}

\title
[An extension of Liebmann's Theorem to hypersurfaces with boundary]
{An extension of Liebmann's Theorem to hypersurfaces with boundary}

\author[Cruz, F. and Nelli, B. ]{Cruz, F. and Nelli, B.}
\address{Fl\'avio F. Cruz \\ Departamento de Matem\'atica,  Universidade Regional do Cariri, Campus Crajubar, 63041-141 -Juazeiro do Norte, Cear\'a (Brazil)}
\email{flavio.franca@urca.br}

\address{Barbara Nelli \\  Dipartimento di Ingegneria e Scienze dell’Informazione e Matematica,
Universit\`a dell’Aquila, via Vetoio Loc. Coppito, 67100 L’AQUILA (Italia)}
\email{barbara.nelli@univaq.it}

\keywords{Spherical cap conjecture; CMC surfaces; Convex surfaces}
\subjclass[2000]{53C42, 35J60}

\begin{abstract}
Liebmann's Theorem asserts  that a compact, connected, convex surface with constant mean curvature (CMC) in the Euclidean space must be a totally umbilical sphere. In this article  we  extend Liebmann's result to hypersurfaces with boundary. More precisely,  we prove that a locally convex, embedded, compact, connected CMC hypersurface bounded by a closed strictly convex  $(n-1)$-dimensional submanifold in a hyperplane $\Pi^n\subset \mathbb{R}^{n+1}$ lies in one of the two  halfspace  determined by $\Pi$ and inherits the symmetries of the boundary. Consequently, spherical caps are the only such hypersurfaces with non-zero constant mean curvature bounded by a $(n-1)-$sphere.
\end{abstract}

\maketitle

\section{Introduction}
\label{section1}

A  surface with constant mean curvature (CMC surface) is characterized by the fact that it is a critical point of the area functional with respect to local deformations that preserve the enclosed volume. Surfaces with this property serve as mathematical models for  soap bubbles. A soap film whose boundary lies on a round hoop is modeled by a compact CMC surface with circular boundary.  Experimental observation raises the question whether round spheres are the only closed CMC surfaces, and whether spherical caps and discs are the only compact CMC surfaces bounded by a circle -  the former for nonzero mean curvature, and the latter for zero mean curvature.
While the first question has been fully settled, the second remains wide open.

A notable result in this context, established by Liebmann  \cite{Lie00} in 1900, states that a closed and convex CMC surface must be a sphere. 
Later, Hopf \cite{Hopf3} proved that the sphere is the unique closed CMC surface with genus zero, immersed in the Euclidean space  and, soon after, Alexandrov  \cite{Ale56,Ale62, Hopf3} showed that the only closed CMC surface embedded  in the Euclidean space is the sphere. In the eighties, Barbosa and do Carmo  \cite{BC84} proved that an immersed  closed stable CMC surface must be a sphere.
Both Alexandrov and Barbosa - Do Carmo results hold for constant mean curvature hypersurfaces.

As we wrote above, when one considers  compact surfaces with non-empty boundary, it is still unknown whether the spherical caps are the only examples of compact embedded surfaces with non-zero constant mean curvature bounded by the circle. In  \cite{Kap91} Kapouleas showed that there exist examples of nonzero genus, compact immersed surfaces with constant mean curvature in $\mathbb{R}^3$ bounded by a circle. The so called \textit{Spherical cap conjecture}  was conjectured in \cite{BEMR91}.

\begin{conjecture}
	\label{conjecture}
	A compact non-zero CMC surface bounded by a circle is a spherical cap if either of the following conditions hold:
	
	i) The surface has genus zero and is immersed; 
	
	ii) The surface is embedded.
\end{conjecture}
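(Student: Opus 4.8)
The plan is to treat the two cases by genuinely different mechanisms, since (i) is essentially analytic–topological while (ii) is geometric. In case (i) the surface $\Sigma$ has genus zero and a single boundary circle, so it is conformally a disc $D$; here I would exploit the Hopf differential $Q\,dz^{2}$, which is holomorphic on $D$ precisely because the mean curvature is constant. This is the same observation of Codazzi type that drives Hopf's theorem quoted above, and it is what replaces the genus-zero hypothesis by a concrete analytic object. In case (ii) embeddedness points to the Alexandrov moving-plane method, with the goal of showing $\Sigma$ is a surface of revolution about the axis $\ell$ orthogonal to $\Pi$ through the centre of the boundary circle $C$, and then classifying.

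For the genus-zero immersed case I would follow Nitsche's line of argument. On the disc $D$ the holomorphic quadratic differential $Q\,dz^{2}$ has only isolated zeros, each an umbilic point of $\Sigma$ and each of strictly negative index for the principal line field. The heart of the matter is the boundary analysis: because $C$ is a planar curve of constant curvature, one controls the principal directions of $\Sigma$ along $\partial D$ and extracts the boundary behaviour of $Q$. A Poincaré–Hopf index count for the principal line field on the disc, whose Euler characteristic is $1$, together with this boundary behaviour is then incompatible with a nonvanishing holomorphic $Q$, since all umbilic indices are negative; hence $Q\equiv 0$. Total umbilicity forces $\Sigma$ to be an open piece of a sphere or a plane, and being bounded by $C$ it is a spherical cap (a flat disc when $H=0$). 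I would flag that the delicate point is exactly the derivation of the boundary behaviour of $Q$ \emph{without} assuming a priori that $\Sigma$ meets $\Pi$ at a constant contact angle, i.e. without assuming $C$ is a line of curvature; this is where the planarity and the constant curvature of $C$ must be used in full.

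For the embedded case the scheme has two steps: first show that $\Sigma$ lies in one of the closed halfspaces bounded by $\Pi$, and then run Alexandrov reflection across the planes orthogonal to $\Pi$. For the reflection step, fix a horizontal direction $e\subset\Pi$ and sweep the family of planes orthogonal to both $\Pi$ and $e$ in from infinity; since $C$ is a circle it is symmetric under reflection in the plane through its centre orthogonal to $e$, so the reflected part of $\Sigma$ first meets the remainder exactly at that symmetric plane, and the interior and boundary maximum principles for the CMC equation produce a mirror symmetry. Letting $e$ range over all horizontal directions makes $\Sigma$ invariant under every reflection fixing $\ell$, hence a surface of revolution. A rotationally symmetric CMC surface is a piece of a Delaunay surface (or a sphere), and among these the only embedded one bounded by a single circle and contained in a halfspace is the spherical cap, which closes the classification.

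The main obstacle is the first step of case (ii): proving that a merely embedded, arbitrary-genus CMC surface bounded by a circle is enclosed in a halfspace determined by $\Pi$. Without this enclosure the moving-plane sweep cannot even be started, because the reflected copies may escape $\Sigma$ and the first-contact analysis collapses; this is precisely the gap that keeps Conjecture~\ref{conjecture} open and that forces the present paper to add a convexity hypothesis, under which the halfspace containment follows from a tangency and convex-hull argument and the reflection scheme above then applies verbatim. I would also stress that the hypotheses cannot simply be relaxed: Kapouleas's immersed examples of nonzero genus, quoted above, show that some topological or embeddedness restriction is indispensable, so that the genus-zero assumption in (i) and the embeddedness assumption in (ii) are both genuinely needed.
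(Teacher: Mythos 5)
The statement you are addressing is labeled a \emph{conjecture} in the paper, and it is genuinely open: the paper itself does not prove it, but only a special case (Theorem \ref{teorema}), obtained by adding the hypothesis of local convexity. Your text, read closely, concedes rather than closes the two essential gaps, so it is a strategy survey, not a proof. In case (i), holomorphicity of the Hopf differential $Q\,dz^{2}$ on the disc is far too weak on its own: unlike the closed genus-zero situation of Hopf's theorem (where Riemann--Roch forces a holomorphic quadratic differential to vanish), a disc carries an abundance of nowhere-vanishing holomorphic quadratic differentials (e.g.\ $Q\equiv 1$), so no Poincar\'e--Hopf index count can force $Q\equiv 0$ without boundary conditions on $Q$. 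The boundary condition you would need --- that the boundary circle is a line of curvature, equivalently that $M$ meets $\Pi$ at constant contact angle, which makes $Q$ real along $\partial D$ and permits a reflection or argument-principle count --- is precisely what cannot be derived from the hypotheses; you flag this as ``the delicate point'' but supply nothing to resolve it. This is exactly why case (i) is open and why the known partial result in this direction (Al\'ias--L\'opez--Palmer \cite{ALP99}) requires the additional hypothesis of stability.

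In case (ii) you correctly isolate halfspace containment as the crux, and then explicitly state that this step ``is precisely the gap that keeps Conjecture \ref{conjecture} open'' --- that is, you do not prove it, so the moving-plane argument that follows never gets off the ground. One factual correction about the paper's actual mechanism: under the added convexity hypothesis, the halfspace containment does \emph{not} follow from ``a tangency and convex-hull argument,'' as you suggest. The paper's route is considerably more delicate: it first applies the Bian--Guan constant rank theorem \cite{BG1} together with a Simons-type identity and the Hopf boundary point lemma to show that $M$ is strictly convex in its interior with asymptotic boundary directions tangent to $\Sigma$ (Proposition \ref{lema}); this yields only \emph{local} containment in a halfspace along the boundary (Proposition \ref{lema2}); and the global containment then requires a separate argument combining Alexandrov reflection with the flux formula to exclude components of $M\cap\Pi$ inside and outside of $\Sigma$, following \cite{BE91, BEMR91}. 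Your closing observation that Kapouleas's examples \cite{Kap91} show the topological and embeddedness hypotheses are needed is correct, but it underlines rather than repairs the fact that neither case of the conjecture is proved by your proposal.
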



In the spirit of Liebmann's result, we will prove  that   a locally convex, embedded, compact, connected CMC hypersurface bounded by $(n-1)$-dimensional sphere in $\mathbb{R}^{n+1}$ is necessarily a spherical cap. This provides a positive answer to the above conjecture, under the additional assumption of local convexity. To proceed, it’s worth noting that a hypersurface is said to be  {\em locally convex}  when all its principal curvatures are nonnegative.

Let us establish some notation.

Let $M^n$  be a compact embedded non-zero CMC hypersurface with boundary $\partial M=\Sigma$, a $(n-1)$-dimensional closed submanifold embedded in the hyperplane $\Pi^n$ of $\mathbb{R}^{n+1}$.  If $M$ is contained in one of the two halfspace determined by $\Pi$,  the Alexandrov reflection method  \cite{Ale56,Ale62} immediately proves $M$ has the reflectional symmetries of $\Sigma$ with respect to hyperplanes orthogonal to $\Pi$. Hence, if $\Sigma$ is a sphere, then $M$ is a hypersurface of revolution. Since the only compact CMC hypersurfaces of revolution are spherical caps (by Delaunay's classification of CMC surfaces of revolution in $\mathbb{R}^3$ and its extension to $\mathbb{R}^n$ obtained by W.-Y. Hsiang and W.-C. Yu in \cite{Hsiang}), Conjecture \ref{conjecture} holds for the subclass of surfaces that are embedded and contained in a halfspace. It is therefore of interest to obtain natural geometric conditions that force a compact embedded CMC  surface to be contained in a halfspace. In this direction, Koyso \cite{Koi86} proved that a sufficient condition for such an embedded non-zero CMC hypersurface $M$ with boundary $\partial M=\Sigma$ a $(n-1)$-dimensional closed submanifold embedded in a hyperplane $\Pi$ to be contained in a halfspace is to assume that $M$ does not intersect the outside of $\Sigma$ in $\Pi$. Later, Brito,  Meeks, S\'a Earp and Rosenberg showed in \cite{BEMR91} that if $M$ is a compact embedded CMC surface in $\mathbb{R}^{n+1}$ and $\partial M=\Sigma$  a convex submanifold embedded in a hyperplane $\Pi$, which is transverse to $M$ along the boundary $\partial M$, then $M$  lies on one side of $\Pi$. More recently, Al\'ias,  L\'opez and Palmer proved that the umbilical examples are the only CMC stable immersed discs bounded by a circle \cite{ALP99} in $\mathbb{R}^3$. Other interesting partial results on generalizations of  the Spherical Cap Conjecture to higher mean curvature functions and to Space Forms were obtained in \cite{AlDeMa06, Bar90, Bar91, BE91, Nelli}. We refer the reader to \cite{Lop1} for broad overview on the Spherical cap conjecture (see Chap. 1, 4, 5 and 7).

So far, no boundary analogue of Liebmann’s classical rigidity theorem for closed CMC surfaces has been established.
 The result below provides such an analogue.

\begin{theorem}
	\label{teorema}
	Let $\Sigma$ be a closed strictly convex  $(n-1)$-dimensional submanifold in a hyperplane $\Pi\subset \mathbb{R}^{n+1}$. Let $M$ be a connected compact embedded CMC hypersurface in $\mathbb{R}^{n+1}$ with boundary $\Sigma$. If $M$ is locally convex, then $M$ lies in one of the two halfspace defined by $\Pi$. In particular, if $\Sigma$ is a sphere, then $M$ is a $n$-ball or a spherical cap.
\end{theorem}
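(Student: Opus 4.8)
The plan is to reduce the whole statement to the single assertion that $M$ is contained in one of the two closed halfspaces bounded by $\Pi$; granting this, the mechanism recalled in the Introduction finishes the proof. Once $M$ is one-sided, the Alexandrov reflection method \cite{Ale56,Ale62} endows $M$ with every reflectional symmetry of $\Sigma$ across hyperplanes orthogonal to $\Pi$, so for $\Sigma$ a round $(n-1)$-sphere the surface $M$ is a hypersurface of revolution, and the Delaunay--Hsiang--Yu classification \cite{Hsiang} leaves only the spherical caps. It is convenient to treat $H=0$ separately and at once: orienting $M$ by the unit normal $N$ for which local convexity means that the shape operator $A$ is positive semidefinite, minimality gives $\operatorname{tr}A=nH=0$, whence $A\equiv0$; thus $M$ is totally geodesic, hence contained in a hyperplane, and since $\partial M=\Sigma$ spans $\Pi$ that hyperplane must be $\Pi$. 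A compact connected piece of $\Pi$ with boundary $\Sigma$ is the $n$-ball bounded by $\Sigma$, which is the first alternative. From now on I assume $H>0$.

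Put $\Pi=\{x_{n+1}=0\}$ and $h=x_{n+1}|_{M}$, so $\Sigma\subset\{h=0\}$ and $\operatorname{Hess}_M\langle x,v\rangle=-\langle N,v\rangle A$ for every constant $v$. The first real step is a contact-angle computation along the boundary that uses the strict convexity of $\Sigma$. The plane orthogonal to $T\Sigma$ is spanned both by $(\nu_\Sigma,e_{n+1})$ and by $(\eta,N)$, where $\nu_\Sigma$ is the outward normal of $\Sigma$ in $\Pi$ and $\eta$ the outward conormal of $M$; writing $N=\cos\theta\,e_{n+1}-\sin\theta\,\nu_\Sigma$, a direct computation shows, for the orientation fixing $A\ge0$ and every $X\in T\Sigma$,
\[
A(X,X)=\mathrm{II}^{\Pi}_{\Sigma}(X,X)\,\sin\theta .
\]
Since $\Sigma$ is strictly convex, $\mathrm{II}^{\Pi}_{\Sigma}$ is positive definite, so $A\ge0$ forces $\sin\theta$ to keep a constant sign along $\Sigma$: \emph{$M$ meets $\Pi$ one-sidedly all around its boundary}.

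I would then globalise this local one-sidedness by a convexity argument in the spirit of Liebmann. Cap $M$ with the convex planar disc $D\subset\Pi$ bounded by $\Sigma$ to form the closed hypersurface $\widehat M=M\cup_\Sigma D$; $M$ is locally convex, $D$ is flat, and the edge along $\Sigma$ is convex by the previous step, so $\widehat M$ is locally convex (after an edge-smoothing that preserves local convexity), and, being closed, it automatically possesses strictly convex support points (for instance a point farthest from a fixed center). A van Heijenoort--Sacksteder type convexity theorem then upgrades local to global convexity: $\widehat M$ is embedded and equals $\partial K$ for a compact convex body $K$. The hyperplane $\Pi$ supports $K$ along the face $D$, so $K$, and therefore $M$, lies in one closed halfspace bounded by $\Pi$. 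Combined with the reduction of the first paragraph, this proves the theorem.

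The step I expect to be the genuine obstacle is exactly this passage from boundary one-sidedness to global one-sidedness. Local convexity supplies only \emph{local} supporting hyperplanes and does not by itself prevent the interior of $M$ from crossing $\Pi$; making the capped surface $\widehat M$ a legitimate input for a convexity theorem requires verifying that it is an immersed closed locally convex hypersurface with a genuinely convex edge along $\Sigma$ (in particular that $M$ does not overhang $\Pi$), and this is where strict convexity of $\Sigma$ is indispensable. In the transverse case $\sin\theta>0$ one may instead invoke directly the theorem of Brito, Meeks, S\'a Earp and Rosenberg \cite{BEMR91}; the delicate remaining point is then to rule out, or absorb, the boundary tangencies $\sin\theta=0$, which I would control with the maximum principle for the CMC equation (comparison with spheres of radius $1/H$). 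The constant mean curvature hypothesis thus enters in isolating the minimal ($n$-ball) case, in the comparison with spheres of radius $1/H$ used to control boundary tangencies, and, decisively, in the final Delaunay--Hsiang--Yu classification.
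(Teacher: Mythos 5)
Your skeleton agrees with the paper's: reduce everything to showing $M$ lies in a closed halfspace of $\Pi$, then conclude by Alexandrov reflection and the Delaunay--Hsiang--Yu classification; your $H=0$ argument ($A\ge 0$, $\operatorname{tr}A=0$ forces $A\equiv 0$) is a valid variant of the paper's maximum-principle remark; and your contact-angle identity $A(X,X)=\mathrm{II}^{\Pi}_{\Sigma}(X,X)\sin\theta$ is exactly the paper's formula $h(e_i,e_j)=h_\Sigma(e_i,e_j)\langle\xi,\nu\rangle$. Where you genuinely diverge is the globalization: you cap $M$ with the planar disc and appeal to a van Heijenoort--Sacksteder convexity theorem, whereas the paper never attempts global convexity at all --- it proves local one-sidedness along $\Sigma$ and then excludes any intersection of $\mathrm{int}(M)$ with $\Pi$ by Alexandrov reflection with vertical hyperplanes combined with the flux formula, following \cite{BE91,BEMR91}. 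Unfortunately the two points you defer are precisely where the content of the theorem lies, and neither is filled.

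The first gap is the boundary tangencies. The identity only yields $\sin\theta\ge 0$; at a point $p\in\Sigma$ where $\sin\theta(p)=0$ one has $T_pM=\Pi$, and nothing you have written prevents $M$ from dipping below $\Pi$ there. ``Comparison with spheres of radius $1/H$'' is not an argument: such a comparison presupposes knowing on which side of $\Pi$ the surface sits near $p$, which is the point at issue. The paper's resolution of exactly this difficulty is its technical core: the Bian--Guan constant rank theorem \cite{BG1} together with a Simons-type identity and the Hopf boundary point lemma applied to $\kappa_1$ show that $h>0$ on $\mathrm{int}(M)$ and that the null directions of $h$ on $\partial M$ are tangent to $\Sigma$; one then expands the height function to second order along the inward normal geodesic, using $f(0)=f'(0)=0$ and $f''(0)=h(\nu,\nu)>0$, to get local one-sidedness at a tangency. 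None of this machinery appears in your proposal, and without it the claim that ``$M$ meets $\Pi$ one-sidedly all around its boundary'' is unproved.

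The second gap is the edge convexity of $\widehat M=M\cup_\Sigma D$. Even granting local one-sidedness, local convexity of $\widehat M$ along $\Sigma$ at a transverse point requires the dihedral angle between $M$ and the disc $D$ to be convex, i.e.\ the inward conormal of $M$ must not point outward over the exterior of $\Omega$ (in your notation $\cos\theta\ge 0$, no overhang). Your boundary identity controls only $\sin\theta$ and says nothing about $\cos\theta$; if $M$ overhangs, $\widehat M$ is not locally convex along $\Sigma$, van Heijenoort is inapplicable, and the globalization collapses. Ruling out such overhanging and crossing configurations is essentially the theorem itself, so this cannot be dismissed as an edge-smoothing technicality. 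The paper's Section 3 is designed to avoid this issue entirely: starting only from local one-sidedness along $\Sigma$, it rules out interior intersections of $M$ with $\Pi$ via the tangency principle at highest/lowest points, the Alexandrov reflection argument in $\mathrm{ext}(\Omega)$, and the sign of the flux $\int_\Sigma\langle\xi,\nu\rangle$. If you want to keep your capping strategy, you must supply a proof that the conormal angle satisfies $\cos\theta\ge 0$ along all of $\Sigma$; otherwise you should switch to the reflection-plus-flux route.
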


An outline of the proof is as follows. First, applying a constant rank theorem due Bian-Guan \cite{BG1} and a Simons' identity, we establish that a compact embedded locally convex CMC hypersurface $M$ bounded by a closed strictly convex  $(n-1)$-dimensional submanifold in a hyperplane $\Pi$ is  locally strictly convex in its interior and its asymptotic directions at the boundary are necessarily tangent to the boundary. This result allows us to prove that $M$  is contained in one of the two halfspace determined by $\Pi$ locally along the boundary. Finally, using the Alexandrov reflection method and the flux formula for CMC hypersurfaces we show that $M$ is contained  in a halfspace determined by $\Pi$, following the approach by  \cite{BE91, BEMR91}.

\subsection* {Acknowledgements}

Part of this work was completed while the first author was a guest at the Institut de Mathématiques de Jussieu–Paris Rive Gauche, whose hospitality is gratefully acknowledged. The first author also thanks CAPES for financial support through the CAPES/COFECUB Grant No. 88887.143177/2017-00. The second author is partially supported by INdAM-GNSAGA and 
 PRIN-2022AP8HZ9.


\section{Boundary behavior}
\label{section2}

Let $M^n$ be a connected compact embedded CMC hypersurface in $\mathbb{R}^{n+1}$ with boundary $\partial M=\Sigma$, where $\Sigma$ is a closed strictly convex  $(n-1)$-dimensional submanifold in a hyperplane $\Pi^n\subset \mathbb{R}^{n+1}$. 

If the mean curvature of $M$ is zero $H=0$ then, by the Maximum Principle, $M=\Omega\subset \Pi$ is the bounded region in $\Pi$ limited by $\Sigma$. So, we assume that $H\neq 0$. It is worthwhile to remember that  $M$ is orientable and so, we may choose a globally defined unit vector field $N:M \rightarrow \mathbb{S}^n$ and assume $H>0$. 
First we prove the following:

\begin{proposition}
	\label{lema}
	Let $M$ be as in Theorem \ref{teorema}.  Then, $M$ is locally strictly convex on its interior and its asymptotic directions on the boundary are necessarily tangent to $\Sigma$. 
\end{proposition}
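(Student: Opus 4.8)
The plan is to prove the two assertions separately, treating the interior convexity as a constant-rank phenomenon and the boundary tangency as a boundary maximum-principle phenomenon. Throughout, orient $M$ by the unit normal $N$ for which $H>0$ and normalize the shape operator $A$ so that $\mathrm{tr}\,A=nH$; local convexity means $A\ge 0$. For the interior statement, I would feed the geometry into the Bian--Guan microscopic convexity principle \cite{BG1}. The point is that, by the Codazzi equations together with a Simons-type identity of the form $\Delta h_{ij}=\nabla^2_{ij}(nH)+nH\,h_{il}h_{lj}-|A|^2 h_{ij}$ (whose Hessian term drops because $H$ is constant), the components $h_{ij}$ of the (positive semidefinite) second fundamental form satisfy an elliptic system of exactly the type to which the constant-rank theorem applies. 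This yields that the rank of $A$ is constant, say $r$, on the connected interior of $M$.

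To upgrade constant rank to strict convexity I would show $r=n$ by exhibiting a single interior point where $A>0$. Enclose the compact hypersurface $M$ in its least enclosing sphere of radius $R$; this sphere meets $M$, and one checks that the contact can be taken at an interior point $q$ (using that $M\not\subset\Pi$, since $H\neq 0$). At $q$ the surface $M$ is tangent to the sphere from the inside, so comparison of the second fundamental forms forces every principal curvature there to be at least $1/R>0$; hence $\det A(q)>0$ and the rank is $n$ at $q$. By the constancy just established, $r=n$ on the whole interior, i.e. $M$ is locally strictly convex there.

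For the boundary, fix $p\in\Sigma$ and split $T_pM=T_p\Sigma\oplus\mathbb{R}\eta$, with $\eta$ the inner conormal. Let $e_{n+1}\perp\Pi$ be constant, let $\nu_\Sigma$ be the inner normal of $\Sigma$ in $\Pi$, and write $N=a\,e_{n+1}+b\,\nu_\Sigma$ with $b=\langle N,\nu_\Sigma\rangle$. Because $\Pi$ is totally geodesic in $\mathbb{R}^{n+1}$, the $\mathbb{R}^{n+1}$-valued second fundamental form of $\Sigma$ has no $e_{n+1}$-component, so for $w,w'\in T_p\Sigma$ one obtains the key relation $A(w,w')=b\,\mathrm{II}^{\Pi}_\Sigma(w,w')$, where $\mathrm{II}^{\Pi}_\Sigma$ is the second fundamental form of $\Sigma$ inside $\Pi$, positive definite by strict convexity. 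Local convexity $A\ge 0$ then forces $b\ge 0$. In the case $b=0$ this gives $A|_{T_p\Sigma}=0$; since $A\ge 0$, a vanishing diagonal block kills the mixed terms as well, while $A(\eta,\eta)=nH>0$, so $\ker A=T_p\Sigma$ exactly and the asymptotic directions are precisely tangent to $\Sigma$.

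The remaining case $b>0$, where $A|_{T_p\Sigma}$ is positive definite, is the main obstacle: I must rule out a null direction of $A$ having a nonzero $\eta$-component, i.e. a \emph{transverse} asymptotic direction. Here the algebra of the previous step no longer suffices, because the transverse ($\eta\eta$ and mixed $\eta$--$w$) entries of $A$ are not determined by $\Sigma$ alone. The plan is to derive, from the same Simons identity, a differential inequality for the least eigenvalue of $A$ near $\Sigma$ and then apply a Hopf-type boundary lemma along $\eta$: the least principal curvature is strictly positive in the interior by the second paragraph, so it cannot vanish at $p$ through a transverse direction without violating the boundary strong maximum principle, with the strict convexity of $\Sigma$ entering as the boundary pinning through the identity $A(w,w')=b\,\mathrm{II}^{\Pi}_\Sigma(w,w')$. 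Marrying this transverse Hopf estimate at the boundary with the interior constant-rank and convexity information is precisely the delicate point of the argument.
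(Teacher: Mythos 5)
Your interior argument coincides with the paper's: an interior elliptic point obtained by enclosing $M$ in a large sphere, then the Bian--Guan constant rank theorem applied via the Simons identity $\Delta h+|h|^2h=nHh^2$. Your treatment of the boundary case $b=\langle N,\nu_\Sigma\rangle=0$ is also correct and is in fact a purely algebraic shortcut the paper does not use (the paper derives the relation $h|_{T_p\Sigma}=b\,\mathrm{II}^\Pi_\Sigma$ only later, in the proof of its next proposition): when the tangential block of a positive semidefinite $A$ vanishes, the mixed entries vanish and $A(\eta,\eta)=nH>0$, so the kernel is exactly $T_p\Sigma$. That observation alone already supplies what the subsequent halfspace argument actually consumes, namely $h(\nu,\nu)>0$ at boundary points where $N=\xi$.

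However, the case $b>0$ --- which you correctly identify as the crux --- is left as a plan rather than a proof, and the mechanism you propose does not close it. The Hopf boundary point lemma applied to a function $\kappa_1\ge 0$ with $\Delta\kappa_1\le 0$, positive in the interior and vanishing at $p\in\partial M$, does \emph{not} produce a contradiction: it is perfectly consistent with $\kappa_1(p)=0$ and only forces the inward normal derivative to be nonzero. So "it cannot vanish at $p$ through a transverse direction without violating the boundary strong maximum principle" is not a valid inference as stated; the Hopf lemma sees the scalar $\kappa_1$, not the direction of its null eigenvector, and you have supplied no link between the two. The paper's route is: (i) an algebraic lemma showing that at a boundary point with $\det h(p)=0$ the zero eigenvalue has multiplicity one --- a step you omit entirely, but which is needed even to know that $\kappa_1$ is smooth near $p$ so that $\Delta\kappa_1$ and the Hopf lemma make sense; (ii) the computation from Simons' identity giving $\Delta\kappa_1+(|h|^2-nH\kappa_1)\kappa_1\le 0$, hence $\Delta\kappa_1\le 0$; (iii) the Hopf lemma used only to conclude $\nabla\kappa_1(p)\neq 0$, so that by the implicit function theorem the zero set $\mathcal Z=\{\kappa_1=0\}$ is a smooth hypersurface of $M$ through $p$; (iv) the observation that $\mathcal Z$ must be tangent to $\Sigma$ at $p$, since otherwise it would contain interior points of $M$, contradicting $\mathrm{rank}(h)=n$ there; from this tangency the conclusion about asymptotic directions is drawn. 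Steps (i) and (iii)--(iv) are the substance of the boundary claim, and they are exactly what is missing from your proposal.
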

\begin{proof}
First we notice that $M$ has an interior elliptic point, that is, there is an interior point of $M$ where all the principal curvatures are positive. In fact, since $M$ is not part of a hyperplane, then we easily find a radius $R>0$ and a point $q\in \mathbb{R}^{n+1}$ such that the closed round ball $B_R(q)$ contains $M$ and there is a point $q_0\in  \textrm{int} (M)\cap\partial B_R(q)$ (englobe $M$ with spheres of large radius until such a sphere touches $M$ on one side at an interior point). Hence, the rank of the second fundamental form $h$  of $M$ is $n$ at $q_0$. Therefore, it follows by the constant rank theorem of Bian and Guan (Theorem 1.5 in \cite{BG1}) that the rank of $h$ is constant and equal to $n$ in the interior of $M$. Thus the first part of the proposition is proved. In order to prove the second part,
take a point $p\in \partial M$ such that $\det (h_{ij})(p)=0$, where $h=(h_{ij})$ are the components of $h$ in a local frame of $M$. We choose an orthornormal frame $\{e_1, \ldots e_n\}$ of $M$ around $p$ such that $e_1, \ldots, e_{n-1}\in T\Sigma$ and $e_n=\nu$ on $\Sigma$, where $\nu$ stands for the inner unit conormal vector of $M$ along $\Sigma$ and $h_{ij}(p)=0,$ if $i\neq j$ and $i,j<n$. In particular, $h_{ii}(p)>0$ for $i<n$ and,  as $\det (h_{ij})(p)=0$, one has
\begin{equation}
	\label{det}
h_{nn}(p)=\sum_{i<n}\frac{(h_{in})^2}{h_{ii}}(p).
\end{equation}
We claim that the multiplicity of first eigenvalue $\lambda_1(p)$ of $(h_{ij})(p)$ is one. 

In fact, if $h_{nn}(p)=0$ then, from \eqref{det}, one has $h_{in}(p)=0$, for any $i<n$. Then, for any $\xi=(\xi_1, \ldots ,\xi_n)\in \mathbb{R}^n\setminus\{0\}$, it follows that 
\begin{equation*}
	h_{ij}(p)\xi_i\xi_j =h_{ii}(p)\xi_i^2
\end{equation*}
is minimized (for $|\xi|=1$) when $\xi_i=0$ for $i<n$ and $\xi_n=1$. Therefore, the dimension of the eigenspace for $\lambda_1(p)=0$ is one. On the other hand,  if $h_{nn}(p)>0$ then
\begin{align*}
h_{ij}(p)\xi_i\xi_j & =\sum_{i<n}\left(h_{ii}(p)\xi_i^2+2h_{in}(p)\xi_i\xi_n+\dfrac{h_{in}^2(p)}{h_{ii}(p)}\xi_n^2\right) \\
	& =\sum_{i<n} \left(\sqrt{h_{ii}(p)}\xi_i+ \dfrac{h_{in}(p)}{\sqrt{h_{ii}(p)}}\xi_n \right)^2.
\end{align*}
Then, $ h_{ij}(p)\xi_i\xi_j$ is minimized when $\xi=(\xi_1,\ldots,\xi_n)$, with $|\xi|=1$, is given by
\begin{equation*}
	\xi_i=-\dfrac{h_{in}(p)}{h_{ii}(p)}\xi_n, \textrm{ for } i<n,\quad \textrm{and} \quad 	\xi_n=\left(1+\sum_{i<n}\left(\dfrac{h_{in}(p)}{h_{ii}(p)}\right)^2\right)^{-1} . 
\end{equation*}
Thus, the claim is proven. Now, let $\kappa_1\leq\cdots\leq  \kappa_n$ be the principal curvatures of $M$. Since $\kappa_1$ vanishes if and only if $\lambda_1$ vanishes, it follows that $\kappa_1(p)=0$ implies $\kappa_i(p)\geq \delta>0$ for $i\geq 2$. Therefore, we may choose  a small neighborhood $\mathcal{N}$ of $p$ in $M$  such that the smallest principal curvature $\kappa_1$ is smooth on $\mathcal{N}$. Now we need the following:
\begin{lemma}
	For a dense open set of $M$ we have
	\begin{equation}
		\label{EQ1}
\Delta \kappa_1 +|h|^2\kappa_1=nH\kappa_1^2-2\sum_{i}\sum_{j>1}\dfrac{(h_{1j,i})^2}{\kappa_j-\kappa_1}\leq nH\kappa_1^2,
	\end{equation}
where $\Delta$ denotes the Laplace-Beltrami operator of $M$.
\end{lemma}
\begin{proof}
	First we recall the Simons' identity for the Laplacian of the second fundamental form $h$ of a CMC hypersurface  (see Eq. (16) in \cite{KB})
	\begin{equation}
		\label{SimonEq}
		\Delta h+|h|^2h=nH h^2.
	\end{equation}
For a dense open set in $M$, we can choose a smooth orthonormal frame $e_1, \ldots,e_n$ of eigenvectors of $h=(h_{ij})$ corresponding to the ordered principal curvatures $\kappa_1\leq \cdots\leq  \kappa_n$ of $M$. See \cite{Sing} for the existence of such a frame. Thus, in term of this frame, one can rewrite \eqref{SimonEq} as
\begin{equation}
	\label{SimonEq2}
	\Delta h_{ij}+|h|^2h_{ij}=nHh_{ij}^2\delta_{ij}.
\end{equation}	
For $i=j=1$ one has 
	\begin{equation}
		\label{SimonEq3}
		\sum_{k}h_{11,kk}+|h|^2\kappa_1=nH\kappa_{1}^2.
	\end{equation}		
Since $\nabla_{e_k}e_1$ is orthogonal to $e_1$, for  $k\geq 2$, we have $\nabla_{e_k}e_1=\sum_{l>1}\langle \nabla_{e_k}e_1, e_l\rangle e_l$ and

 \begin{align}
	\begin{split}
	\label{conta-I}
	h_{11,k}  =& \left(\nabla_{e_k} h\right)(e_1, e_1)  = e_k(h(e_1,e_1))-h\left(\nabla_{e_{k}}e_1 , e_1\right)-h\left(e_1 , \nabla_{e_{k}} e_1\right)  \\ =& e_k(h_{11})-2\sum_{l>1}\langle \nabla_{e_k}e_1, e_l\rangle h\left(e_1 , e_l\right)\\ =& e_k(\kappa_{1}),
		\end{split}
\end{align}
since $h_{1l}=0$ for $l>1.$ 

Similarly,
\begin{align}
	\begin{split}
		\label{conta-II}
	h_{1\ell  ,k}    = & \left(\nabla_{e_k} h\right)(e_1, e_\ell)  = e_k(h_{1\ell})-h\left(\nabla_{e_{k}}e_1 , e_\ell\right)-h\left(e_1,\nabla_{e_{k}}e_\ell\right) \\ =& -\kappa_\ell\langle \nabla_{e_k}e_1,e_\ell\rangle-\kappa_1\langle \nabla_{e_k}e_\ell,e_1\rangle \\ =& (\kappa_1-\kappa_\ell)\langle \nabla_{e_k}e_1,e_\ell\rangle.
	\end{split}
\end{align}
Then
\begin{align}
	\begin{split}
		\label{conta-I|I}
h_{11, kk} = & \left(\nabla_{e_k} \nabla_{e_k}  h\right)(e_1, e_1) \\  = & e_k \left( h_{11,k} \right)-2\left(\nabla_{e_k} h\right)(\nabla_{e_k}e_1, e_1) - \left(\nabla_{\nabla_{e_{k}} e_k} h\right)(e_1, e_1) \\   = & e_k\left(e_k(\kappa_1)\right)-2\sum_{\ell>1}\langle \nabla_{e_{k}}e_1, e_\ell\rangle h_{1\ell, k}-\left(\nabla_{e_{k}}e_k\right)(\kappa_1),
	\end{split}
\end{align}
since, by \eqref{conta-I}, we have
\[
 \left(\nabla_{\nabla_{e_{k}} e_k} h\right)(e_1, e_1) = \left(\nabla_{e_{k}}e_k\right)(\kappa_1)
\]
Using \eqref{conta-II} we get
\[
\sum_{\ell>1}\langle \nabla_{e_{k}}e_1, e_\ell\rangle h_{1\ell, k}= - \sum_{\ell>1}\dfrac{(h_{1\ell,k})^2}{\kappa_\ell-\kappa_1}.
\]
Therefore
\begin{equation*}
	\sum_{k} h_{11,kk}=\Delta \kappa_1+2\sum_k\sum_{\ell>1}\dfrac{(h_{1\ell,k})^2}{\kappa_\ell-\kappa_1},
\end{equation*}	
as 
\begin{align*}
\Delta\kappa_1&=\sum_k \nabla^2 \kappa_1(e_k,e_k)=\sum_ig\left( \nabla_{e_k}\nabla\kappa_1, e_k\right)-g\left(\nabla_{e_k}e_k,\nabla\kappa_1\right) \\ &= \sum_k e_k\left(e_k(\kappa_1)\right)-\left(\nabla_{e_{k}}e_k\right)(\kappa_1),
\end{align*}
where $g$ denotes the metric of $\Sigma$. Applying this into \eqref{SimonEq3} we obtain \eqref{EQ1}.
\end{proof}	 
From \eqref{EQ1} we have
\begin{equation}
	\label{EQ2}
	\Delta \kappa_1+(|h|^2-nH\kappa_1)\kappa_1\leq 0
\end{equation}
in $\mathcal N$. However,
\begin{equation}
	\label{EQ3}
	|h|^2-nH\kappa_1 =\sum_{i>1}\kappa_i\left(\kappa_i-\kappa_1\right)>0,
\end{equation}
then, from \eqref{EQ2}, we conclude that $\kappa_{1}\geq 0$ satisfies
\begin{equation}
	\label{EQ4}
	\Delta \kappa_1\leq 0 \quad \textrm{in}\quad \mathcal{N}.
\end{equation}

Since $\kappa_1(p)=0$, the Hopf boundary point lemma implies $|\nabla \kappa_1(p)|\neq 0$. As $\kappa_1$ is smooth in $\mathcal{N}$, by the Implicit Function Theorem, the set \[
\mathcal Z=\{q\in\mathcal{N} : \kappa_1(q)=0\}
\]
is a smooth submanifold of $M$ passing through $p\in \Sigma$. If $\mathcal Z$ is transversal to $\Sigma$ then we have a contradiction since $\textrm{rank}(h)=n$ in the interior of $M$. Thus, $\mathcal Z$ is tangent to $\Sigma$ at $p$. Then, we conclude  that the asymptotic directions at points on the boundary of $M$ are necessarily tangent to $\Sigma$.
\end{proof}

Now we are in condition to show that, along $\Sigma$,  $M$ is locally contained in one of the two halfspace of $\mathbb{R}^{n+1}$ determined by $\Pi.$

\begin{proposition}
	\label{lema2}
	Let $M$ be as in Theorem \ref{teorema}.  Then, $M$, along $\Sigma$, is locally contained in one of the two halfspace of $\mathbb{R}^{n+1}$ determined by $\Pi.$ 
	\end{proposition}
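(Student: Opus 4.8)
The plan is to study the position of $M$ with respect to $\Pi$ near each boundary point, distinguishing the points where $M$ is transverse to $\Pi$ from those where it is tangent, and then to patch the resulting local one–sided conclusions using the connectedness of $\Sigma$. Take $\Pi=\{x_{n+1}=0\}$ and let $f=\langle\,\cdot\,,e_{n+1}\rangle$ be the height over $\Pi$, so that $f|_\Sigma\equiv 0$ and, along $M$, $\nabla f=e_{n+1}^\top$ and $\operatorname{Hess}f=-\langle N,e_{n+1}\rangle\,h$. Let $\eta$ be the outer unit conormal of $\Sigma$ in $\Pi$ and $\sigma$ the second fundamental form of $\Sigma\subset\Pi$ with respect to $\eta$, which is positive definite since $\Sigma$ is strictly convex. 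Because $\Sigma\subset\Pi$, for $X\in T_p\Sigma$ the ambient acceleration $\bar\nabla_XX$ has no $e_{n+1}$–component, and a short computation yields the boundary identity $h(X,X)=\langle N,\eta\rangle\,\sigma(X,X)$. As $h\ge 0$ (local convexity) and $\sigma>0$, this forces $\langle N,\eta\rangle\ge 0$ everywhere on $\Sigma$, with equality precisely where $T_pM=T_p\Pi$, that is where $N=\pm e_{n+1}$.

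At a \emph{transverse} point, $\langle N,\eta\rangle>0$, the inner conormal obeys $\langle\nu,e_{n+1}\rangle\ne 0$, so $f$ is strictly monotone as one enters $\operatorname{int}(M)$ and $M$ lies, on a one–sided neighborhood of $p$, in a single halfspace. The sign of $\langle\nu,e_{n+1}\rangle$ is linked to $\langle N,\eta\rangle$ by the rotation carrying the orthonormal basis $\{N,\nu\}$ of $(T_p\Sigma)^\perp$ onto $\{e_{n+1},\eta\}$; this rotation varies continuously with $p$ and $\langle N,\eta\rangle\ge 0$ throughout, so $\langle\nu,e_{n+1}\rangle$ keeps a fixed sign on the transverse set and all transverse points put $M$ on the same side of $\Pi$.

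The decisive case is a \emph{tangency} point, where $\nabla f(p)=0$ and the first–order argument collapses; this is exactly where Proposition~\ref{lema} is used. There the identity gives $h(X,X)=0$ for every $X\in T_p\Sigma$, hence $T_p\Sigma\subseteq\ker h_p$; since asymptotic directions on the boundary must be tangent to $\Sigma$, the conormal $\nu$ is not asymptotic, so $\ker h_p=T_p\Sigma$, $\operatorname{rank}h_p=1$ and $h(\nu,\nu)=nH>0$. Writing $M$ near $p$ as a graph $x_{n+1}=\phi$ over $T_pM=T_p\Pi$, with $\phi=0$ on $\Sigma$ and $\nabla\phi(p)=0$, the Hessian $\operatorname{Hess}f=-\langle N,e_{n+1}\rangle\,h$ is semidefinite of constant sign on a neighborhood of $p$ in $M$ and nondegenerate in the interior direction $\nu$, while the region of $\Pi$ bounded by $\Sigma$ into which $M$ projects is convex. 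The elementary fact that a convex (or concave) function on a convex domain whose gradient vanishes at a boundary point keeps one sign then shows that $f$ does not change sign near $p$, so $M$ is once more locally contained in one halfspace; continuity of $N$ along the connected $\Sigma$ identifies this side with the one selected by the transverse points.

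I expect the tangency analysis to be the main obstacle. The difficulty is to pass from a mere sign of the second derivative of $f$ to genuine one–sidedness on a full neighborhood: this needs both the strict inequality $h(\nu,\nu)>0$ furnished by Proposition~\ref{lema} and the convexity of the domain enclosed by $\Sigma$, together with the orientation bookkeeping required to check that the transverse and tangential neighborhoods single out the same halfspace of $\Pi$.
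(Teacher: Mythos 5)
Your skeleton is the same as the paper's: the boundary identity $h(X,X)=\langle N,\eta\rangle\,\sigma(X,X)$ (the paper's \eqref{conta-h-bordo} combined with \eqref{conta-angulos}), the resulting sign $\langle N,\eta\rangle\ge 0$ from local convexity and strict convexity of $\Sigma$, the dichotomy between transverse and tangent boundary points, and the use of Proposition \ref{lema} to get $h(\nu,\nu)>0$ at a tangency. The only genuinely different step is the finishing move at a tangency point: the paper expands the height $f(t)=\langle \exp_p(t\nu),\xi\rangle$ along the normal geodesic and reads off the sign from $f(0)=f'(0)=0$, $f''(0)=h(\nu,\nu)\langle N,\xi\rangle\neq 0$, which yields strict one-sidedness on the normal fan through $p$ without any hypothesis on the shape of the projected domain; you instead write $M$ as a graph over $T_p\Pi$ and invoke a convex-function-on-a-convex-domain argument.

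That is where your argument has a concrete gap. The assertion that ``the region of $\Pi$ bounded by $\Sigma$ into which $M$ projects is convex'' is not justified, and it is not innocuous: at a tangency point $\nu(p)=\pm\eta(p)$, and the graph domain is a one-sided neighborhood of $p$ bounded by $\Sigma$ lying on the side that $\nu(p)$ points to. Only if $\nu(p)=-\eta(p)$ is that domain (a ball intersected with the inside of $\Sigma$) convex; if $\nu(p)=+\eta(p)$ the domain is the \emph{exterior} side, which is not even star-shaped about $p$ (a segment from $p$ to a nearby exterior point re-enters the inside of the strictly convex $\Sigma$), and the elementary fact you invoke fails. Nothing in the first-order data at $p$ rules out $\nu(p)=+\eta(p)$; deciding this sign is exactly the delicate point (it is equivalent to the paper's claim $N(p)=\xi$ rather than $-\xi$), so it cannot simply be asserted. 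Separately, your closing sentence that ``continuity of $N$ along the connected $\Sigma$ identifies this side with the one selected by the transverse points'' needs transverse points to accumulate at every tangency point (or some substitute argument); you do not establish this, whereas the quantity the paper tracks, $\langle\nu,\xi\rangle=\langle N,\eta\rangle\ge 0$, fixes the candidate halfspace globally along $\Sigma$ before the case analysis begins.
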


\begin{proof}
 First we need to introduce some notations. The orientation of $M$ induces a natural orientation on its boundary as follows: we say that a basis $\{e_1, \ldots, e_{n-1}\}$ for $T_p\Sigma$ is positively oriented if $\{e_1, \ldots, e_{n-1}, \nu\}$ is a positively oriented basis for $T_pM$,  where $\nu$ denotes the inward pointing unit cornormal vector field along $\Sigma$. Let $\eta$  be the unitary vector field normal to $\Sigma$ in $\Pi$ which points outward with respect to the compact domain in $\Pi$ bounded by $\Sigma$. We denote by $\xi$ the unitary vector  normal to $\Pi$  in $\mathbb{R}^{n+1}$ which is compatible with $\eta$ and with the orientation of $\Sigma$, i. e., such that $\{\eta, e_1, \ldots, e_{n-1},\xi\}$ is a positively oriented basis of $\mathbb{R}^{n+1}$. We can assume that $\Pi$ is the horizontal plane that passes through the origin of $\mathbb{R}^{n+1}$.

Let $\{e_1, \ldots, e_{n-1}\}$ be (locally defined) a positively oriented frame field along $\Sigma$. Using this frame, we can write $\nu = N\times e_1\times\cdots\times e_{n-1} $ and $\eta= e_1\times\cdots\times e_{n-1} \times\xi$, since $\det( e_1, \dots, e_{n-1},\nu,  N)=\det( e_1, \dots, e_{n-1}, \eta, \xi)=1.$ From these expressions we compute
\begin{align*}
\eta =& e_1\times\cdots\times e_{n-1} \times \xi \\ =& e_1\times\cdots\times e_{n-1} \times (\langle\xi, N\rangle N+\langle \xi, \nu\rangle \nu)\\ =& -\langle\xi, N\rangle \nu+\langle \xi, \nu\rangle N.
\end{align*}
where   $\bar\nabla$ denotes  the usual connection of $\mathbb{R}^{n+1}$. 
Thus,
\begin{equation}
		\label{conta-angulos}
\langle \eta ,\nu\rangle=-\langle \xi, N\rangle \quad \textrm{and}\quad \langle \eta , N\rangle=\langle \xi, \nu\rangle.
\end{equation}
A direct computation gives that the second fundamental form of $M$ along of $\Sigma$ satisfies
\begin{equation}
	\label{conta-h-bordo}
h(e_i,e_j)=  h_{\Sigma} (e_i,e_j)\langle \xi, \nu\rangle,
\end{equation}
for any $1\leq i,j<n$, where $h_\Sigma$ denotes the second fundamental form of $\Sigma$ as a submanifold of $\Pi$. In fact,  as
\[
\bar \nabla_{e_i}e_j=\sum_k^{n-1}\langle \bar \nabla_{e_i}e_j, e_k\rangle e_k+\langle\bar \nabla_{e_i}e_j, \eta\rangle \eta+\langle\bar \nabla_{e_i}e_j, \xi\rangle \xi
\]
for any $1\leq i,j<n$, we have
\[
h(e_i, e_j)=\langle\bar \nabla_{e_i}e_j, \eta\rangle\langle\eta,N\rangle + \langle \bar \nabla_{e_i}e_j, \xi \rangle \langle\xi, N\rangle.
\]
It then follows from $\langle \bar\nabla_{e_i}e_j, \xi \rangle =0$ that
\[
h(e_i,e_j)=\langle \bar \nabla_{e_i}e_j, \eta\rangle  \langle N,\eta\rangle,
\]
then \eqref{conta-h-bordo} follows from \eqref{conta-angulos}.

Now we parameterize a neighborhood of $\Sigma$ in $M$ by setting  
\[
X(p,t)=\exp_{p}\left(t\nu(p)\right),
\]
$(p,t)\in \Sigma\times [0,\epsilon),$ for $\epsilon>0$ sufficiently small. Let $p\in \Sigma$ an arbitrary point of $\Sigma$. We claim that
 \[
 f(t):=\langle X(p,t), \xi \rangle >0
 \]
  for  $0<t<\epsilon$, if $\epsilon>0$ is small enough. Indeed, it follows from \eqref{conta-h-bordo} that 
  \[
  f'(0)=\langle  \nu, \xi\rangle= \frac{h(e_i, e_i)}{h_\Sigma(e_i, e_i)}(p).
  \] 
  for any $1\leq i<n$. Then, if $h(e_i,e_i)(p)>0$ for some $1\leq i<n$, we have that $f(0)=0$ and $f'(0)>0$, hence the claim is proved. On the other hand, if $h(e_i,e_i)(p)= 0$ for all $1\leq i<n$ we have $f(0)=f'(0)=0$ and follows from \eqref{conta-h-bordo} that $N(p)=\xi$. So,
  \[
  f''(0)= \langle X_{tt}, \xi \rangle (p)= \langle \bar\nabla_\nu \nu, N \rangle (p)=  h(\nu, \nu) >0
  \]
since $h(\nu, \nu) >0$,  by Proposition \ref{lema}. Thus, $f>0$ for  $0<t<\epsilon$, if $\epsilon>0$ is sufficiently small.
Hence, $M$ is locally contained in the halfspace $\Pi_+:=\{ p\in\mathbb{R}^{n+1} : \langle \xi, p\rangle >0\}$ along $\Sigma$.
\end{proof}


\section{Proof of Theorem \ref*{teorema}}

We start by proving that  a hypersurface $M$ satisfying the assumptions of Theorem \ref{teorema},  is contained entirely  in $\Pi_+$.
Assume by contradiction   that $M$, satisfying the assumptions of Theorem \ref*{teorema}, does not live entirely  in $\Pi_+$, that is, it  meets $\Pi$ other than at $\Sigma$. As $M$ is locally contained in $\Pi_+$ along $\Sigma$, we can assume $M$ meets $\Pi$ transversally by making a small vertical translation. Let $\Omega$ be the bounded region in $\Pi$ with boundary $\Sigma$.  When $M$ is a surface, it is proved in   \cite{BE91} and \cite{BEMR91} that transversality along $\Sigma$ yields that $M$ lives entirely in 
$\Pi_+$. The proof is analogous in all dimensions, but we include it here for the sake of completeness.

First we prove that $M\cap \Omega\neq \emptyset$ and $M\cap(\Pi\setminus \bar\Omega)=\emptyset$ leads to a contradiction. In fact, consider a hemisphere $S\subset\mathbb{R}^{n+1}$ below $\Pi$ with $\partial S \subset \Pi$ and denote by $B \subset \Pi$ the closed $n$-ball bounded by $\partial S$. Take $S$ of radius sufficiently large such that $\Omega \subset B$ and the part of $M$ below $\Pi$ is contained in the domain enclosed by $S\cup B$. If $A\subset \Pi$ denotes the annular region $B \setminus \Omega$, then $M\cup A\cup S$ defines a non-smooth, closed, embedded hypersurface in $\mathbb R^{n+1}$. By the Alexander duality, this hypersurface bounds a domain $W \subset \mathbb R^{n+1}$. We orient $M$ by the unit normal vector $N$ pointing to the interior of $W$.
Denote by $p$ and $q$ a highest and lowest points of $M$ with respect to $\Pi$, respectively. In particular, $p, q \in int(M)$ and  $\langle q, \xi\rangle < 0 < \langle p, \xi\rangle$, where $\xi$ is the unitary upward vector  normal to $\Pi$  in $\mathbb{R}^{n+1}$.  Since $N(p)$ points to $W$, $N(p)= (0,\ldots, 0,-1)$. Compare $M$ with the (horizontal) tangent space $T_p M$ at $p$, where $T_p M$ is oriented pointing downwards. Since $T_p M$ is a minimal surface and $M$ lives below $T_ p M$ around $p$, the tangent principle implies $H(p)\geq 0$. As $H\neq 0$, then $H >0$ on $M$. Now, we work with the point $q\in M$. Since $N(q)$ points to $W$, we have $N(q)= (0,\ldots, 0,-1)$ (Figure \ref{figura1}). Compare the horizontal plane $T_q M$ with $M$ at $q$. Let us take on $T_q M$ the orientation pointing downwards. In this situation, $T_q M$ lives above $M$ around $q$ and the tangent principle now yields $H(q)\leq  0$, a contradiction.
\begin{figure}
	\begin{center}
		\begin{tikzpicture}
			[line width=2pt] \draw (3,0) arc (0:-180:3);
			\draw (-4,0) -- (-2,0);
			\draw (2,0) -- (4,0);
			\node[below][font=\LARGE] at (4.2,0) {$\Pi$};
			\node[below][font=\LARGE] at (2.5,-2) {$S$};
			\draw [cyan] plot [smooth, tension=1] coordinates { (-2,0) (-1.5, .5) (-.8,-.6) (0,2) (1,-1.5) (2,0)};
			\node[above][font=\LARGE] at (1,1) {$M$};
			\draw[->] (-.05, 2) -- (-.05, 1.2);
			\draw[->] (1.2, -1.6) -- (1.2, -2.2);
			\node[above] at (-.05, 2) {$p$};
			\node[above] at (1.2, -1.6) {$q$};
		\end{tikzpicture}
	\end{center}
	\caption{$M\cap \Omega\neq \emptyset$ and $M\cap(\Pi\setminus \bar\Omega)=\emptyset$ }
	\label{figura1}
\end{figure}

We will show that $M\cap(P\setminus \bar\Omega)\neq \emptyset $ leads to a contradiction. Let $\Sigma_1,\ldots,\Sigma_m\subset \Pi$ be the closed $(n-1)$-dimensional submanifolds of $M$ in $\Omega$ (if there are any).
For each $i$, with $1\leq i\leq m$, let $\Sigma^+_i(\epsilon)$ be the submanifold of $M$, near $\Sigma_i$, obtained by intersecting $M$ with the horizontal hyperplane $\Pi(\epsilon)$, at height $\epsilon$. Similarly, let $\Sigma^-_i (\epsilon)$ be the submanifold $M\cap P(-\epsilon)$ that is near to $\Sigma_i$. We form an embedded hypersurface $N$ by removing from $M$ the annuli region bounded by the $\Sigma^+_i(\epsilon)\cup \Sigma^-_i(\epsilon)$ and attaching the horizontal  $n$-balls $B^+_i\cup B^-_i$ bounded
by the $\Sigma^+_i(\epsilon)\cup \Sigma^-_i(\epsilon)$. Also we attach $\Omega$ to $M$ along $\Sigma$. To ensure that $N$ is embedded, one uses different values of $\epsilon$ when several $\Sigma_i$ are concentric. 

Let $\bar M$ be the connected component of $N$ that contains $\Sigma$. $\bar M$ separates $\mathbb R^{n+1}$ into two components and let $W$ be the closure of the bounded component. The nonsmooth points of $\bar M$ are on $\Sigma$ and  $\Sigma^{\pm}_i(\epsilon)$, and the mean curvature vector of $M$ points into $W$. Notice that this is possible since $\bar M$ is obtained by attaching $n$-balls to a smooth connected submanifold of $M$ (where the mean curvature vector is never zero) and the mean curvature vector extends across these $n$-balls. We orient $M$ by the mean curvature vector. Denote the set $\Pi\setminus \Omega$ by $ext(\Omega)$. 

Now we observe that $\bar M\cap  ext(\Omega)$ has no components that
are null homotopic in $ext(\Omega)$. To see this, suppose $\tilde\Sigma$ were such a component. Let $\ell$ be an infinite line segment, starting at a point of $\Omega$ and intersecting  $\tilde\Sigma$ in at least two points. Consider a family $Q(t), t<\infty$, of parallel vertical hyperplanes coming from infinity and orthogonal to $\ell$. Suppose the family $Q(t)$ intersects $\bar M\cap ext(\Omega)$ for the first time at $t=t_0$.
Continuing the movement of $Q(t_0)$ by parallel translation towards $\Sigma$, would produce for some $t_1 < t_0$ a point of tangential contact of $\bar M$ with the reflection of $\bar M\cap \left(\cup_{t_{1}\leq t\leq t_{0}} Q(t) \cap \bar M\right)$ in $Q(t_1)$, before reaching $\Sigma$. By the Alexandrov reflection principle, this would yield a hyperplane of symmetry of $\bar M$ with $\Omega$ on one side of the hyperplane; a contradiction. We remark that the Alexandrov reflection principle applies here because $\bar M$ bounds the compact region $W$ and the first point of contact of the symmetry with $\bar M$ occurs before $\Omega$, hence at a smooth point of $M$. We used the fact that $\Sigma$ is convex here to ensure that a symmetry of $\Sigma$ touches $\Sigma$ before the hyperplane reaches $\Sigma$.

Therefore, we can assume $Q(t)$ touches $\bar M\cap ext(\Omega)$ for the first time along a submanifold $\tilde\Sigma$ that is homotopic to $\Sigma$ in $ext(\Omega)$. Let $A$ be the annular region in $ext(\Omega)$ bounded by $\tilde\Sigma\cup \Sigma$. Observe first that $int(A)$ contains no components $\tilde\Sigma_1$ of $\bar M\cap ext(\Omega)$ that are homotopic to $\Sigma$ in $A$. This follows by using the reflection principle with vertical hyperplanes $Q(t)$ as above: a symmetry of $\bar M$ would intersect $\bar M$ for a first time before arriving at $\Omega$.

Hence the mean curvature vector along $\tilde\Sigma\cup \Sigma$ points into $A$; in particular, along $\Sigma$, it points towards $ext(\Omega)$. But this contradicts the flux formula
\[
\int_\Sigma \langle Y, \nu\rangle =2H\int_\Omega \langle Y, \xi_\Omega\rangle,
\]
where $Y = \xi$, so $\langle Y,\nu\rangle > 0$ along $\Sigma$. Since the mean curvature vector points towards $ext(\Omega)$ along $\Sigma$, then $\xi_\Omega = -e_3$; so the right side is -$2H (\textrm{vol}\Omega)$; a contradiction. Thus, $M\cap(\Pi\setminus \bar\Omega)=\emptyset$ and $M\cap \Omega = \emptyset$, following that $M$ lives in $\Pi_+$. Now,  we can employ the Alexandrov reflection method with vertical planes to prove that $M$ inherits all the symmetries of its boundary. Consequently, if $\Sigma$ is an $(n-1)$-sphere, it follows that $M$ is rotationally symmetric and thus a spherical cap (see \cite{Hsiang}).
\qed



\begin{thebibliography}{99} 


\bibitem{Ale56}
Alexandrov, A. D., Uniqueness theorems for surfaces in the large I. \emph{Vestnik Leningrad. Univ.,} \textbf{11} (1956), 5--17.

\bibitem{Ale62}
Alexandrov, A. D., A characteristic property of spheres. \emph{Ann. Mat. Pura Appl.}, \textbf{58} (1962), 303--315.

\bibitem{ALP99}
Al\'ias, L. J., L\'opez, R., Palmer, B., Stable constant mean curvature surfaces with circular boundary. \emph{Proc. Am. Math. Soc.} \textbf{127} (1999), 1195--1200.

\bibitem{AlDeMa06}
Al\'ias, L. J., De Lira, J. H., Malacarne J.M. , Constant higher-order mean curvature hypersurfaces in Riemannian spaces.\emph{J. Inst. Math. Jussieu} \textbf{5} (2006) 4, 527--562.


\bibitem{Bar90}
Barbosa, J. L., Hypersurfaces of constant mean curvature on $\mathbb{R}^{n+1}$ bounded by an Euclidean sphere. \emph{Geometry and Topology II.} World Sci. Publ. Teaneck. (1990), 1--9.

\bibitem{Bar91}
Barbosa, J. L., Constant mean curvature surfaces bounded by a planar curve. \emph{Mat. Contemp.} \textbf{1}  (1991), 3--15.

\bibitem{BC84}
Barbosa, J. L., do Carmo, M. Stability of hypersurfaces with constant mean curvature. \emph{Math. Z.} \textbf{173}  (1984), 339 --353.

\bibitem{BG1}
Bian, B., Guan, P., A microscopic convexity principle for nonlinear partial differential equations.  \emph{Invent. Math.} \textbf{177} (2009), 300 --335.

\bibitem{BE91}
Brito, F., Sa Earp, R., Geometric configurations of constant mean curvature surfaces with planar boundary.  \emph{An. Acad. Bras. Ci\^enc.} \textbf{63} (1991), 5 --19.

\bibitem{BEMR91}
Brito, F., Meeks, W.H. III, Sa Earp, R., Rosenberg, H., Structure theorems for constant mean curvature surfaces bounded by a planar curve.
\emph{Indiana Univ. Math. J.} \textbf{40} (1991), 333--343.

\bibitem{CNSIV} 
Caffarelli, L.,  Nirenberg L., Spruck, J., 
The Dirichlet problem for nonlinear second-order elliptic equations IV: Starshaped compact Weingarten
hypersurfaces, \emph{Current Topics in P.D.E.,} ed. by Y. Ohya et al. (1986),. 1--26, Kinokunize Co., Tokyo.

\bibitem{CL58}
Chern, S.-s., Lashof, R. K., On the total curvature of immersed manifolds II. 
\emph{Michigan Math. J.} \textbf{5} (1958), 5--12.

\bibitem{Hopf3}
Hopf, E., \emph{Diferential geometry in the large. Lectures Notes in Mathematics}, vol. 1000. Springer, Berlin 1983.

\bibitem{HRS92}
Hoffman, D., Rosenberg, H., Spruck, J., Boundary value problems for surfaces of constant Gauss curvature.
\emph{Commun. Pure Appl. Math.,} \textbf{45} (1992), 1051--1062.

\bibitem{Hsiang}
Hsiang, W.-Y., Yu, W.-C.: A generalization of a theorem of Delaunay.
\emph{J. Differ. Geom.,} \textbf{16} (1981), 161--177.


\bibitem{Jel}
Jellet, J. H., Sur la surface dont la courbature moyenne est constant.
\emph{J. Math. Pures Appl.,} \textbf{18} (1853), 163--167.

\bibitem{Kap91}
Kapouleas, N., Compact constant mean curvature surfaces in Euclidean three-space. 
\emph{J. Differ. Geom.} \textbf{33} (1991), 683--715.

\bibitem{KB}
Katsumi, N., Brian, S., A formula of Simons' type and hypersurfaces of constant mean curvature. 
\emph{J. Differ. Geom.} \textbf{3} (1969), 367--377.

\bibitem{Koi86}
Koiso, M., Symmetry of hypersurfaces of constant mean curvature with symmetric boundary.
\emph{Math. Z.} \textbf{191} (1986), 567--574.

\bibitem{Lie00}
Liebmann, H., Über die Verbiebung der geschlossenen Flächen positiver Krümmung.
\emph{Math. Ann.}  \textbf{53} (1900), 91--112.

\bibitem{Lop1}
L\'opez, R., \emph{Constant mean curvature surfaces with boundary.} Springer-Verlag Berlin Heidelberg, 2013.

\bibitem{Nelli}
Nelli, B., Rosenberg, H., Some remarks on embedded hypersurfaces in hyperbolic space of constant mean curvature and spherical boundary. \emph{Ann. of Glob. Anal. and Geom.}
\textbf{13} (1995), 23--30.

\bibitem{SERR}
Serrin, J., On surfaces of constant mean curvature which span a given space curve.
\emph{Math. Z.}. \textbf{112} (1969), 77--88.

\bibitem{Sing}
Singley, D., Smoothness theorems for the principal curvature and principal vectors of a hypersurface.
\emph{The Rocky Mountain J}. \textbf{5} (1975), 135--144.

\bibitem{Wen86}
Wente, H. C.,  Counterexample to a conjecture of H. Hopf.
\emph{Pac. J. Math.,} \textbf{121} (1986), 193--243.

\end{thebibliography}
\end{document}